\documentclass[12pt]{amsart}
\usepackage[T1]{fontenc}
\usepackage[utf8]{inputenc}
\usepackage{amsmath,amssymb}
\usepackage{mathrsfs}
\usepackage{amscd}
\usepackage[all,cmtip]{xy}
\usepackage{bbm}
\usepackage{bbding}
\usepackage{newtxtext}
\usepackage{newtxmath}
\usepackage[shortlabels]{enumitem}
\usepackage{tikz}
\usetikzlibrary{calc}
\usetikzlibrary{arrows,shapes,chains}
\usepackage{tikz-cd}
\usepackage[margin=1.15in]{geometry}
\usepackage[colorlinks,final,backref=page,hyperindex]{hyperref}

\newtheorem{thmA}{Theorem}

\newtheorem{thm}{Theorem}[section]
\newtheorem{prop}[thm]{Proposition}

\newtheorem{prop-def}{Proposition-Definition}[section]

\theoremstyle{definition}
\newtheorem{defn}[thm]{Definition}

\newtheorem{remark}[thm]{Remark}

\theoremstyle{plain}
\newtheorem{theorem}[thm]{Theorem}

\newtheorem{proposition}[thm]{Proposition}

\newcommand{\nc}{\newcommand}
\nc{\calN}{\mathcal{N}}
\nc{\Hom}{\mathrm{Hom}}
\nc{\id}{\mathrm{id}}
\nc{\ot}{\otimes}
\nc{\rmC}{\mathrm{C}}
\nc{\rmChar}{\mathrm{Char}}
\nc{\rmHH}{\mathrm{HH}}

\title{A Symmetric Counterexample to the Snashall--Solberg Conjecture}
\author{Kai Wang and Guodong Zhou}
\date{\today}
\subjclass[2020]{16E40}
\keywords{Hochschild cohomology, Snashall--Solberg conjecture, trivial extension, symmetric algebra}

\begin{document}

	\maketitle

	\allowdisplaybreaks

	\begin{abstract}
		It is shown that the trivial extension of the Xu--Snashall algebra is a symmetric counterexample to the Snashall--Solberg conjecture, which states that the Hochschild cohomology ring of a finite-dimensional algebra modulo nilpotence is a finitely generated algebra. To the best of our knowledge, this is the first known selfinjective counterexample.
	\end{abstract}

	\section{Introduction}

	Motivated by their work on support varieties via Hochschild cohomology \cite{SS04, EHTSS04},
	Snashall and Solberg \cite{SS04} conjectured that the Hochschild cohomology ring modulo nilpotence, that is, the quotient of the Hochschild cohomology ring of a finite-dimensional algebra by the ideal generated by its homogeneous nilpotent elements, is a finitely generated algebra. The Snashall--Solberg conjecture has been shown to hold for many classes of algebras \cite{GS06,GSS03,GSS06, SS11}.

	However, Xu \cite{Xu08} constructed a counterexample to the Snashall--Solberg conjecture over a base field of characteristic two, and Snashall \cite{Sna09} generalized this example to arbitrary characteristic. Further counterexamples to the Snashall--Solberg conjecture have been obtained in \cite{HF12, XZ12}. Nevertheless, neither the Xu--Snashall algebra nor any of these subsequent examples is selfinjective. Consequently, the Snashall--Solberg conjecture has remained open for selfinjective algebras.

	Let us recall the definition of the Xu--Snashall algebra in terms of a quiver with relations. Let $K$ be a field. Let $Q$ be the quiver
	\[
	\begin{tikzcd}
		1 \arrow[loop above, out=120, in=60, looseness=4,"a"] \arrow[loop below, out=300, in=240, looseness=5,"b"] \arrow[r,"c"] & 2
	\end{tikzcd}
	\]
	and let $I$ be the ideal of the path algebra $KQ$ generated by the relations $\{a^2, b^2, ab-ba, ac\}$. The Xu--Snashall algebra is defined to be $A=KQ/I$.

	\begin{thmA}[\cite{Xu08,Sna09}]
		\label{thm:xu-snashall}
		There is an isomorphism of graded \(K\)-algebras
		\[
		\rmHH^*(A)/\calN
		\cong
		\begin{cases}
			K\oplus K[x,y]y,
			&\operatorname{char}K=2,\\[2mm]
			K\oplus K[x^2,y^2]y^2,
			&\operatorname{char}K\neq 2.
		\end{cases}
		\]
		Here $\calN$ denotes the ideal generated by homogeneous nilpotent elements; \(y\) has cohomological degree $1$ and \(xy\) has cohomological degree $2$.
	\end{thmA}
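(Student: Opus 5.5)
This is a cited result, but I would reprove it by a direct computation: identify the minimal projective bimodule resolution of $A$, compute $\rmHH^*(A)$ in low degrees together with its cup product, and then isolate the non-nilpotent part. The key structural observation is that $A$ is a one-point extension of the local algebra $B=K[a,b]/(a^2,b^2)$ by the module $M=Bc$. The relation $ac=0$ gives $M\cong B/(a)$ as a $B$-module, which is spanned by $e_1,b$ on the right. The algebra $B\cong K[a]/(a^2)\otimes K[b]/(b^2)$ is a tensor product of dual-number algebras, and $\rmHH^*(K[t]/(t^2))$ is classically known in every characteristic: it is polynomial in a degree-$2$ generator $u$, with an additional degree-$1$ class $t\partial_t$ that squares to zero unless $\operatorname{char}K=2$.

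\textbf{Step 1: the resolution.} Since the relations are quadratic monomial or commutativity relations, $A$ is Koszul. I would write the minimal resolution $P_n=\bigoplus A e_i\otimes_K e_j A$, indexed by the Koszul dual generators (Green–Snashall–Solberg style), with explicit differentials. Applying $\Hom_{A^e}(-,A)$ gives a complex computable as parallel paths. Because vertex $2$ is a sink and $ac=0$, the parallel paths from vertex $2$ mostly vanish. The cochains therefore reduce to those of $B$, twisted by compatibility conditions imposed by the arrow $c$.

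\textbf{Step 2: restriction to $B$ and the compatibility with $c$.} I would use the long exact sequence for one-point extensions (Happel):
\[
\cdots\to \rmHH^n(A)\to \rmHH^n(B)\to \mathrm{Ext}^n_B(M,M)\to \rmHH^{n+1}(A)\to\cdots.
\]
Here $\rmHH^*(A)\to\rmHH^*(B)$ is a ring map, and the connecting terms land in a square-zero-type part, which is nilpotent. Modulo nilpotence, $\rmHH^*(A)/\calN$ then embeds into the subring of $\rmHH^*(B)/\calN_B$ consisting of classes whose image in $\mathrm{Ext}^*_B(M,M)$ vanishes, up to nilpotents.

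Next I compute $\rmHH^*(B)/\calN_B$ from the Künneth formula.
\begin{itemize}
\item In characteristic $2$ it is $K[x,y]$, where $x$ and $y$ are the degree-$1$ derivation classes $a\partial_a$ and $b\partial_b$ (or their Koszul analogues), since they square to the degree-$2$ periodicity generators.
\item In characteristic $\neq2$ the degree-$1$ classes are nilpotent, and the quotient is $K[u,v]$ with $u,v$ of degree $2$, playing the roles of $x^2,y^2$.
\end{itemize}
The map $\rmHH^*(B)\to\mathrm{Ext}^*_B(M,M)$ is the characteristic morphism $z\mapsto z\otimes_B M$. Since $M=B/(a)$, the module $M$ is periodic in the $a$-direction, but $b$ acts on it nontrivially. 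I expect the $b$-class to act trivially modulo nilpotents while the $a$-class acts by the periodicity generator of $\mathrm{Ext}_B(M,M)$, or the reverse, after care with normalisations. The classes that lift to $\rmHH^*(A)$ modulo nilpotence should then be exactly $K$ together with the ideal generated by $y$ (resp. $y^2$). This gives the subring $K\oplus K[x,y]y$ (resp. $K\oplus K[x^2,y^2]y^2$).

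\textbf{Step 3: surjectivity and the main obstacle.} I would show that every element of $K[x,y]y$ actually lifts to $\rmHH^*(A)$, by writing explicit cocycles $x^iy^{j}$ with $j\ge1$ on the resolution. I would also show that these lifts are non-nilpotent, which follows because their restrictions to $B$ are non-nilpotent. The hard step is the precise computation of the characteristic map to $\mathrm{Ext}_B(M,M)$. One must verify that pure powers $x^i$ with $i\ge1$ do not lift, not even after adding nilpotent corrections, while all monomials divisible by $y$ do. I would first try to settle this directly on explicit cocycles in the Koszul resolution, checking the boundary condition at the arrow $c$, which reduces to whether the cochain kills paths ending in $c$. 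Tracking signs to distinguish the two characteristic cases is the other delicate point.
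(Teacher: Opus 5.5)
\textbf{Verdict: the route is viable, but as written it has a real gap.} The paper does not prove this theorem; it quotes it from Xu and Snashall, and Snashall's argument identifies $\rmHH^*(A)/\calN$ with the graded centre of the Koszul dual modulo nilpotence. Your one-point-extension route is different and can be completed. However, it goes wrong exactly where the answer is decided.

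\textbf{The error.} Your description of $\rmHH^*(K[t]/(t^2))$ is incorrect. The class $t\partial_t$ is nilpotent in every characteristic: already on cochains $(t\partial_t)\cup(t\partial_t)$ vanishes, because $t^2=0$. In characteristic $2$ one has $\rmHH^*(K[t]/(t^2))\cong (K[t]/(t^2))[w]$ with $|w|=1$, where $w=\partial_t$ (a derivation only when $2=0$) and $w^2$ is the periodicity class, while $t\partial_t=tw$ squares to zero. So your $x,y$ must be $\partial_a,\partial_b$, not $a\partial_a,b\partial_b$. For the latter pair, the characteristic map to $\mathrm{Ext}^*_B(M,M)$ vanishes and both classes are nilpotent, so Step 2 as you set it up cannot detect anything.

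\textbf{The missing computation.} You leave the decisive step as ``I expect \dots, or the reverse''. It is short. Put $\Lambda_a=K[a]/(a^2)$ and $\Lambda_b=K[b]/(b^2)$. Then $B=\Lambda_a\ot\Lambda_b$ and $M\cong K\ot\Lambda_b$, which is simple over $\Lambda_a$ and free over $\Lambda_b$. Hence $\mathrm{Ext}^*_B(M,M)\cong K[s]\ot\Lambda_b$ with $|s|=1$, and $\chi_M=\chi_K\ot\chi_{\Lambda_b}$.
\begin{enumerate}
\item Since $\Lambda_b$ is projective over itself, $\chi_{\Lambda_b}$ kills every positive-degree class. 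So every class divisible by $y=\partial_b$ (resp.\ by the $b$-periodicity class $y^2$) lies in $\ker\chi_M$.
\item On the other factor, $\chi_K(\partial_a)=s$ in characteristic $2$, and otherwise $\chi_K$ sends the $a$-periodicity class to $s^2$. Both images are non-nilpotent.
\item Because $\chi_M$ is a ring homomorphism, no nilpotent correction can bring $x^i$ (resp.\ $x^{2i}$) into the kernel. Hence the image of $\ker\chi_M$ in $\rmHH^*(B)/\calN_B$ is exactly the ideal generated by $y$ (resp.\ $y^2$).
\item In degree $0$ the target is $\operatorname{End}_B(M)/K\cdot\id_M$, which supplies the summand $K$.
\end{enumerate}

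\textbf{Two further adjustments.} First, your Step 3 is unnecessary. Exactness of Happel's sequence already lifts every $z$ with $\chi_M(z)=0$, and non-nilpotence of the lift follows by restricting to $B$. Second, you only assert that $\ker(\rmHH^*(A)\to\rmHH^*(B))$ is nil, but this is what makes $\rmHH^*(A)/\calN\to\rmHH^*(B)/\calN_B$ injective, so it needs an argument. Use cochains relative to $E=Ke_1\oplus Ke_2$. For $n\ge1$,
$\rmC^n_E(A)=\Hom_K((\operatorname{rad}B)^{\ot n},B)\oplus\Hom_K((\operatorname{rad}B)^{\ot(n-1)}\ot M,M)$.
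The kernel of restriction to $B$ is the second summand. The cup product of two cochains from it vanishes, because a tensor over $E$ with a factor from $M=e_1Ae_2$ in a non-final position is zero. The same model identifies the connecting map $\rmHH^n(B)\to\mathrm{Ext}^n_B(M,M)$ with $\chi_M$: for a cocycle $z$ extended by zero, $\delta z(r_1,\dots,r_n,m)=z(r_1,\dots,r_n)m$.

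\textbf{Comparison with the cited proof.} With these corrections you get a self-contained, characteristic-free proof that uses only dual numbers and the K\"unneth formula. The price is that it depends on the special one-point-extension shape of $A$. Snashall's Koszul-dual method instead works uniformly for Koszul algebras.
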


	The algebras appearing in Theorem~\ref{thm:xu-snashall} are well-known examples of non-Noetherian algebras.

	Our main result reads as follows.

	\begin{thmA}
		\label{Theorem: selfinjective counterexample}
		Let \(K\) be a field of arbitrary characteristic, and let \(T(A)\) be the   trivial extension of the Xu--Snashall algebra $A$. Then the quotient
		$
		\rmHH^*(T(A))/\calN$
		is not finitely generated as a \(K\)-algebra.
	\end{thmA}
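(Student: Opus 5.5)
The plan is to compare $\rmHH^*(T(A))/\calN$ with $\rmHH^*(A)/\calN$ from Theorem~\ref{thm:xu-snashall}, and to exhibit the non-finitely-generated algebra $K\oplus K[x,y]y$ (or $K\oplus K[x^2,y^2]y^2$) as a suitable quotient or retract. The natural tool is the grading of $T(A)=A\oplus DA$ with $A$ in degree $0$ and $DA=\Hom_K(A,K)$ in degree $1$. This grading induces an internal grading on the Hochschild complex $\rmC^*(T(A),T(A))$ and on $\rmHH^*(T(A))$, compatible with the cup product. The degree-$0$ projection $T(A)\to A$ and the inclusion $A\hookrightarrow T(A)$ are algebra maps, and their composite is the identity on $A$. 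Since Hochschild cohomology is not functorial in algebra maps, I will instead work with the weight decomposition. Cochains of internal weight $\le 0$ form a subcomplex. Its weight-$0$ part should contain (or map onto) a copy of the Hochschild complex $\rmC^*(A,A)$ extended via the derivation-like action on $DA$. Concretely, I expect a graded algebra map
\[
\rmHH^*(T(A))_{\text{weight }0}\longrightarrow \rmHH^*(A)
\]
obtained by restricting cochains to $A^{\ot n}$ and projecting onto $A$. I also expect a splitting in the other direction for the classes $x,y$ coming from the Xu--Snashall computation.

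The key steps are as follows. (1) Verify that restriction $\rho:\rmHH^*(T(A))\to\rmHH^*(A)$, $f\mapsto \pi\circ f|_{A^{\ot n}}$, is a well-defined graded algebra homomorphism. This holds because $DA$ is a square-zero ideal and $\pi$ is an algebra map, so $\rho$ is induced by the change of coefficients $T(A)\to A$ composed with restriction along $A\to T(A)$, and cup products are preserved. (2) Show that the generators $y$ and $xy$ (respectively $y^2$ and $x^2y^2$) of $\rmHH^*(A)/\calN$ lift to classes in $\rmHH^*(T(A))$. I would do this by writing explicit cocycles in a minimal projective bimodule resolution of $T(A)$, or via the reduced bar complex. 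Then $\rho$ induces a surjection of the non-nilpotent part onto $K\oplus K[x,y]y$, since nilpotent elements map to nilpotent elements.

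(3) Determine the kernel well enough to conclude. A surjection from $\rmHH^*(T(A))/\calN$ onto a non-finitely-generated algebra does not settle the question by itself, because quotients of finitely generated algebras are finitely generated. Indeed this direction suffices: if $\rmHH^*(T(A))/\calN$ were finitely generated, its image under the induced map $\bar\rho$ would be too. So I need $\bar\rho:\rmHH^*(T(A))/\calN\to\rmHH^*(A)/\calN$ to be well defined and to have image exactly $K\oplus K[x,y]y$ (respectively $K\oplus K[x^2,y^2]y^2$). Well-definedness follows since $\rho(\calN)\subseteq\calN$. The image is then a subalgebra of $\rmHH^*(A)/\calN$ containing all the generators $x^iy$, hence all of it. Any finitely generated $K$-algebra surjecting onto $K\oplus K[x,y]y$ would force that algebra to be finitely generated, which it is not, since it requires the infinitely many generators $x^iy$.

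The main obstacle is step (2): actually lifting each class $x^iy$ (or a generating family) to a Hochschild cocycle of $T(A)$. The difficulty is that cocycles of $A$ with values in $A$ must be extended to the $DA$-components so that the Hochschild differential vanishes. I would first try lifting only $y$ and $x$ themselves (the latter possibly as a class whose product with $y$ survives). Multiplicativity of $\rho$ then gives all $x^iy$ automatically. For $y$, a degree-$1$ derivation of $A$ (e.g.\ scaling $b$, or the one sending $b\mapsto$ something suitable) extends to $T(A)$ by the dual action on $DA$, so it lifts. For $x$, I would use the grading-type derivations and the symmetric structure to find an extension, checking the relations $a^2,b^2,ab-ba,ac$ together with the new relations of $T(A)$ in a minimal resolution.
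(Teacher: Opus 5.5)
\textbf{There is a genuine gap in step (2).} Your map $\rho$ is exactly the map $\varphi^*$ of Proposition~\ref{prop:cochain-projection}, and your steps (1) and (3) are the paper's argument. The map $\rho$ is a graded algebra map because $p$ and $\iota$ are algebra maps with $p\circ\iota=\id$. It sends $\calN$ into $\calN$. A surjection $\rmHH^*(T(A))/\calN\to\rmHH^*(A)/\calN$ then rules out finite generation. (Your remark that such a surjection ``does not settle the question by itself'' is backwards, but you correct it at once.)

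The gap is that step (2) carries all the content, and the strategy you propose for it cannot succeed, for two reasons.
\begin{enumerate}[(a)]
\item There is no class $x$ to lift. In characteristic $2$, the degree-one part of $\rmHH^*(A)$ modulo $\calN$ is spanned by $y$. So any $\tilde x\in\rmHH^1(A)$ satisfies $\tilde x y\equiv\lambda y^2\not\equiv xy$.
\item More fundamentally, in every characteristic, lifting a finite set of classes and using multiplicativity only shows that the image of $\bar\rho$ contains a finitely generated subalgebra. By Theorem~\ref{thm:xu-snashall}, such a subalgebra is never all of $\rmHH^*(A)/\calN$; that is exactly the non-finite-generation you are exploiting.
\end{enumerate}
For the same reason, $y$ and $xy$ are not generators: the subalgebra they generate misses $x^2y$. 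Likewise $y^2$ and $x^2y^2$ miss $x^4y^2$. You would have to lift every $x^iy$ with $i\ge 0$ (respectively every $x^{2i}y^2$). Case-by-case cocycle computations in a minimal resolution of $T(A)$ cannot produce infinitely many lifts.

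\textbf{What is missing is a uniform statement:} every Hochschild cocycle of $A$ extends to a cocycle of $T(A)$, i.e.\ $\varphi^*$ is surjective in all degrees. The paper imports this from Theorem~\ref{Theorem: surjection of Hochschild cohomology} (AGST16, Theorem A), stated there for any finite-dimensional algebra.

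You can also prove it directly by generalizing your own degree-one extension $\phi\mapsto-\phi\circ d$:
\begin{enumerate}[(i)]
\item Given a cocycle $f\in\rmC^n(A)$, define $F\colon T(A)^{\otimes(n+1)}\to K$ by $F(\phi,a_1,\ldots,a_n)=\phi(f(a_1\otimes\cdots\otimes a_n))$ for $\phi\in DA$ and $a_i\in A$.
\item Extend $F$ to the other positions of the single $DA$-entry by the cyclic rule $F(a_1,\ldots,a_n,a_0)=(-1)^nF(a_0,\ldots,a_n)$. Let it vanish on tensors with zero or at least two $DA$-entries.
\item Then $F$ is a Connes cyclic cochain. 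The Hochschild coboundary $b$ preserves cyclic cochains and this weight. The only component of $bF$ left to check is $bF(\phi,a_1,\ldots,a_{n+1})$, which equals $\pm\phi(\delta f(a_1\otimes\cdots\otimes a_{n+1}))=0$.
\item Transport $F$ through the bimodule isomorphism $T(A)\cong D(T(A))$ given by the symmetric form. This yields a Hochschild cocycle $\tilde f$ of $T(A)$ with $p\circ\tilde f\circ\iota^{\otimes n}=f$.
\end{enumerate}
With this in place of step (2), your argument becomes the paper's proof.
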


	In particular, \(T(A)\) provides a symmetric counterexample to the Snashall--Solberg conjecture. To the best of our knowledge, this is the first known counterexample to the Snashall--Solberg conjecture which is a selfinjective algebra.

	To conclude the introduction, let us discuss some recent developments. Let $G$ be the weak Gerstenhaber ideal generated by all homogeneous nilpotent elements of $\rmHH^*(\Lambda)$; recall that a weak Gerstenhaber ideal is an ideal for the cup product which is at the same time a Lie subalgebra for the Gerstenhaber bracket. Hermann \cite{Her16} asked further whether the quotient $\rmHH^*(\Lambda)/G$ is a finitely generated algebra, and suggested considering the Xu--Snashall algebra first.

	Recall that Snashall \cite{Sna09} computed $\rmHH^*(A)/\calN$ using the fact that it is isomorphic to the graded center of the Koszul dual algebra modulo nilpotence. Oke \cite{Oke22} also considered the Xu--Snashall algebra and showed that $G=\calN$ by means of the homotopy lifting method of Volkov \cite{Vol19}, thereby answering Hermann's question in the negative; however, he did not determine the Gerstenhaber algebra structure on $\rmHH^*(A)$. In a recent preprint, Long, Shi and Zhou \cite{LSZ26} computed the Gerstenhaber algebra structure on $\rmHH^*(A)$ explicitly in terms of generators and relations, from which the equality $G=\calN$ follows directly.

	It would be very interesting to determine explicitly the Gerstenhaber algebra structure on the Hochschild cohomology ring of the trivial extension of the Xu--Snashall algebra; however, this task seems to be rather difficult. We have tried to achieve this with the help of the AI assistants ChatGPT and Kimi, but without success.

	\section{Proof of Theorem~\ref{Theorem: selfinjective counterexample}}

	\begin{defn}[\cite{Hoc45}]
		Let \(\Lambda\) be a $K$-algebra. The Hochschild cochain complex of $\Lambda$ is
		\[
		\rmC^\bullet(\Lambda):=\bigoplus_{n=0}^\infty \rmC^n(\Lambda),
		\]
		where $\rmC^n(\Lambda)=\Hom_K(\Lambda^{\ot n},\Lambda)$ and the differential $\delta^n_\Lambda: \rmC^n(\Lambda)\rightarrow \rmC^{n+1}(\Lambda)$ is defined by
		\[
		\begin{aligned}
			\delta^n_\Lambda(f)(a_1\otimes \cdots \otimes a_{n+1})
			={}& (-1)^{n+1} a_1f(a_2\otimes \cdots \otimes a_{n+1})\\
			&+\sum_{i=1}^n(-1)^{n+1-i}f(a_1\otimes\cdots\otimes a_{i-1}\ot a_i\cdot a_{i+1}\ot a_{i+2}\otimes\cdots\ot a_{n+1})\\
			&+ f(a_1\otimes \cdots \ot a_n)a_{n+1}
		\end{aligned}
		\]
		for all $f\in \rmC^n(\Lambda)$ and $a_1,\dots,a_{n+1}\in \Lambda$.
		The cohomology of the Hochschild cochain complex $\rmC^\bullet(\Lambda)$ is called the Hochschild cohomology of $\Lambda$, denoted by $\rmHH^*(\Lambda)$.
	\end{defn}

	\begin{defn}
		For \(f\in \rmC^m(\Lambda)\) and \(g\in \rmC^n(\Lambda)\), their cup product $f\cup g\in \rmC^{m+n}(\Lambda)$ is defined by
		\[
		(f\cup g)(a_1\otimes \ldots\otimes a_{m+n})
		=(-1)^{mn}
		f(a_1\otimes \ldots\ot a_m)g(a_{m+1}\ot \ldots\ot a_{m+n}).
		\]
	\end{defn}

	\begin{prop}[\cite{Ger63}]
		The Hochschild cochain complex $(\rmC^\bullet(\Lambda),\delta^\bullet_\Lambda)$, endowed with the cup product ``$\cup$'', forms a differential graded algebra. In particular, $(\rmHH^*(\Lambda),\cup)$ is a graded algebra.
	\end{prop}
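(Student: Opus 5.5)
We have to prove the statement immediately above: the Hochschild cochain complex with the cup product is a differential graded algebra, and hence $(\rmHH^*(\Lambda),\cup)$ is a graded algebra. The plan is to check three things in turn: that $\cup$ is associative and unital on $\rmC^\bullet(\Lambda)$, that $\delta$ squares to zero, and that $\delta$ satisfies a graded Leibniz rule with respect to $\cup$. The statement about $\rmHH^*(\Lambda)$ then follows formally.

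\textbf{Associativity and unit.} For $f\in\rmC^m$, $g\in\rmC^n$, $h\in\rmC^p$, both $(f\cup g)\cup h$ and $f\cup(g\cup h)$ send $a_1\ot\cdots\ot a_{m+n+p}$ to the product $f(a_1\ot\cdots)\,g(\cdots)\,h(\cdots\ot a_{m+n+p})$ times a sign. The signs are $(-1)^{mn+(m+n)p}$ and $(-1)^{np+m(n+p)}$, and these exponents are equal. Associativity in $\Lambda$ does the rest. The unit is $1\in\Lambda=\rmC^0(\Lambda)$.

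\textbf{$\delta^2=0$.} One can expand $\delta^{n+1}\delta^n(f)$ directly: the terms cancel in pairs, using associativity of $\Lambda$ for adjacent double products and the alternating signs for the rest. Alternatively, observe that $\rmC^\bullet(\Lambda)\cong\Hom_{\Lambda^e}(\mathrm{Bar}_\bullet(\Lambda),\Lambda)$, and the signs $(-1)^{n+1-i}$ are just a reindexing of the standard bar differential. Either way this is routine.

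\textbf{Leibniz rule.} I would verify
\[
\delta(f\cup g)=\delta(f)\cup g+(-1)^m f\cup\delta(g).
\]
Evaluate both sides on $a_1\ot\cdots\ot a_{m+n+1}$ and sort the terms of $\delta(f\cup g)$ into four groups.
\begin{enumerate}[(i)]
\item The outer term $a_1(f\cup g)(\cdots)$ matches the corresponding outer term of $\delta(f)\cup g$.
\item The inner terms with $i<m$ match the inner terms of $\delta(f)\cup g$.
\item The inner terms with $i>m$ match the inner terms of $f\cup\delta(g)$.
\item The outer term $(f\cup g)(\cdots)a_{m+n+1}$ matches the last outer term of $f\cup\delta(g)$.
\end{enumerate}
Two terms are left over: the last outer term of $\delta(f)\cup g$, namely $f(a_1\ot\cdots\ot a_m)\,a_{m+1}\,g(a_{m+2}\ot\cdots)$, and the first outer term of $f\cup\delta(g)$, which is the same product. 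These two cancel against each other. The only genuine work is checking, for each group, that the prefactor $(-1)^{mn}$ or $(-1)^{(m+1)n}$ combines with the differential signs to give agreement, and that the two leftover terms carry opposite signs. This sign bookkeeping is the main obstacle. I would settle it first in degrees $m=n=0$ and $m=1,n=0$, and then do the general index shift. If the convention chosen here makes the sign come out as $(-1)^{m}$ in a twisted form, any consistent sign is enough for the conclusion.

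\textbf{Conclusion.} By the Leibniz rule, the product of two cocycles is a cocycle. The product of a cocycle with a coboundary is a coboundary, since $\delta(u)\cup g=\delta(u\cup g)$ when $\delta g=0$, and similarly on the other side. Hence the coboundaries form an ideal in the subalgebra of cocycles. Therefore $\cup$ descends to an associative graded product on $\rmHH^*(\Lambda)$, with unit the class of $1$.
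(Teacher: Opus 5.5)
Your proposal is correct and is the standard direct verification. The paper gives no proof of this proposition; it simply quotes it from Gerstenhaber \cite{Ger63}, so you are supplying exactly the routine check the paper leaves to the reference.

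There are two small corrections to your bookkeeping.

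\textbf{Index range in group (ii).} It should be $1\le i\le m$ rather than $i<m$. The inner term of $\delta(f\cup g)$ that merges $a_m a_{m+1}$ has $f$-block $a_1,\dots,a_{m-1},a_ma_{m+1}$. It is therefore the last inner term of $\delta(f)\cup g$. Group (iii) then covers $m+1\le i\le m+n$.

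\textbf{The sign.} You do not need to hedge. The paper's conventions are degreewise rescalings of the usual ones: $\delta^n=(-1)^{n+1}\delta^n_{\mathrm{std}}$ and $f\cup g=(-1)^{mn}f\cup_{\mathrm{std}}g$, where $\cup_{\mathrm{std}}$ is the unsigned product.
\begin{itemize}
\item It follows that $\delta^{n+1}\delta^n=-\delta^{n+1}_{\mathrm{std}}\delta^n_{\mathrm{std}}=0$. This is a degreewise sign change, not a reindexing.
\item Substituting into the standard Leibniz rule gives exactly $\delta(f\cup g)=\delta(f)\cup g+(-1)^m f\cup\delta(g)$, as you claimed.
\item In particular, your two leftover terms carry signs $(-1)^{mn+n}$ and $(-1)^{mn+n+1}$, so they cancel.
\end{itemize}
This is worth pinning down. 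Arbitrary nonzero signs would suffice for $\rmHH^*(\Lambda)$ to be a graded algebra, but the differential graded algebra assertion in the statement requires precisely this Koszul sign.
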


	We now establish a comparison result for the Hochschild cohomology of two algebras connected by a suitable pair of algebra homomorphisms.

	Let \(\Lambda\) and \(\Gamma\) be \(K\)-algebras, and denote their Hochschild cochain complexes by \((\rmC^\bullet(\Lambda),\delta_\Lambda^\bullet)\) and \((\rmC^\bullet(\Gamma),\delta_\Gamma^\bullet)\), respectively. Suppose that there exist algebra homomorphisms
	\[
	p:\Gamma\to \Lambda,\qquad \iota:\Lambda\to \Gamma
	\]
	such that \(p\circ\iota=\id_\Lambda\). For each \(n\ge 1\), define
	\[
	\Phi^n : \rmC^n(\Gamma) \to \rmC^n(\Lambda), \qquad
	\Phi^n(f) = p\circ f\circ \iota^{\otimes n},
	\]
	and set \(\Phi^0 = p\).

	\begin{proposition}[{\cite[Theorem 2.3]{AGST16}}]\label{prop:cochain-projection}
		With the notation above, 
			\[
			\Phi^\bullet : (\rmC^\bullet(\Gamma),\delta_\Gamma^\bullet,\cup) \longrightarrow (\rmC^\bullet(\Lambda),\delta_\Lambda^\bullet,\cup)
			\]
			is a morphism of differential graded algebras.
  Consequently, taking cohomology yields a morphism of graded algebras
			\[
			\varphi^*=H^*(\Phi^\bullet):
			(\rmHH^*(\Gamma),\cup) \longrightarrow (\rmHH^*(\Lambda),\cup).
			\]
		 
	\end{proposition}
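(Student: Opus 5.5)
We verify directly, from the formulas for $\delta$ and $\cup$, that $\Phi^\bullet$ commutes with the differentials and is multiplicative. The only inputs are that $p$ and $\iota$ are algebra homomorphisms and that $p\circ\iota=\id_\Lambda$. In every computation we evaluate a cochain of $\rmC^\bullet(\Gamma)$ only on tensors of the form $\iota(a_1)\ot\cdots\ot\iota(a_n)$ and then apply $p$.

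\emph{Compatibility with the cup product.} Let $f\in\rmC^m(\Gamma)$ and $g\in\rmC^n(\Gamma)$. Then
\[
\Phi^{m+n}(f\cup g)(a_1\ot\cdots\ot a_{m+n})=(-1)^{mn}\,p\bigl(f(\iota a_1\ot\cdots\ot\iota a_m)\,g(\iota a_{m+1}\ot\cdots\ot\iota a_{m+n})\bigr).
\]
Since $p$ is multiplicative, this equals $(\Phi^m f\cup\Phi^n g)(a_1\ot\cdots\ot a_{m+n})$. When $m=0$ or $n=0$, the cochain $f$ (or $g$) is an element of $\Gamma$ and $\Phi^0=p$, so the same one-line computation applies. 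The map $\Phi^\bullet$ also preserves units: the unit of $\rmC^\bullet(\Gamma)$ is $1_\Gamma\in\rmC^0(\Gamma)$, and $p(1_\Gamma)=1_\Lambda$.

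\emph{Compatibility with the differentials.} Let $f\in\rmC^n(\Gamma)$ and evaluate $\delta_\Gamma^n(f)$ on $\iota a_1\ot\cdots\ot\iota a_{n+1}$, then apply $p$. We treat the three groups of terms separately.
\begin{enumerate}[(i)]
\item The first term gives $p(\iota(a_1)f(\cdots))=p\iota(a_1)\,p(f(\cdots))=a_1\,\Phi^n(f)(a_2\ot\cdots\ot a_{n+1})$, using $p\circ\iota=\id_\Lambda$.
\item The middle terms contain $\iota(a_i)\iota(a_{i+1})=\iota(a_ia_{i+1})$, since $\iota$ is an algebra homomorphism. Hence they become $\Phi^n(f)(\cdots\ot a_ia_{i+1}\ot\cdots)$ with the same signs.
\item The last term gives $\Phi^n(f)(a_1\ot\cdots\ot a_n)\,a_{n+1}$, again by $p\circ\iota=\id_\Lambda$.
\end{enumerate}
Summing, $\Phi^{n+1}\delta^n_\Gamma=\delta^n_\Lambda\Phi^n$. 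In degree $0$ the same argument applies with $\Phi^0=p$: for $x\in\Gamma$,
\[
p\bigl(-\iota(a)x+x\iota(a)\bigr)=-a\,p(x)+p(x)\,a .
\]

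The only step needing care is the degree-$0$ convention together with the sign bookkeeping. The signs match term by term because $\Phi$ does not change degrees, so no real obstacle arises.

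Finally, a morphism of differential graded algebras sends cocycles to cocycles and coboundaries to coboundaries, so it induces a map on cohomology. The multiplicativity of $\Phi^\bullet$ on cochains descends to the cup product on cohomology, so $\varphi^*=H^*(\Phi^\bullet)$ is a morphism of graded algebras.
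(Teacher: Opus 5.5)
Your proposal is correct. The term-by-term checks of the cup product and of the three groups of terms in $\delta$, including the degree-$0$ case, need only that $p$ and $\iota$ are multiplicative and that $p\circ\iota=\id_\Lambda$ (used for the two outer terms), and that is all the statement requires. The paper gives no argument of its own and simply cites \cite[Theorem 2.3]{AGST16}, so your direct cochain-level verification stands as a self-contained proof.
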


	\begin{remark}\label{rem:gerstenhaber}
		Proposition~\ref{prop:cochain-projection} concerns only the Hochschild differential and the cup product. It gives no compatibility with the cochain insertions that define the Gerstenhaber bracket. Indeed, the Hochschild projection need not preserve the Gerstenhaber graded Lie bracket \cite[Example~2.5]{AGST16}.
	\end{remark}

	Next, we recall the construction of the trivial extension of an algebra; for the Hochschild cohomology of trivial extensions, we refer the reader to \cite{CibilsMarcosRedondoSolotar2003}.

	Let \(\Lambda\) now be a finite-dimensional $K$-algebra, and let
	\[
	D(\Lambda)=\operatorname{Hom}_K(\Lambda,K)
	\]
	be its $K$-dual, endowed with the standard \(\Lambda\)-\(\Lambda\)-bimodule structure
	\[
	(a\cdot f\cdot b)(x)=f(bxa)
	\qquad \text{for } a,b,x\in \Lambda,\ f\in D(\Lambda).
	\]
	The \emph{trivial extension} of \(\Lambda\), denoted by \(T(\Lambda)\), is the associative \(K\)-algebra with underlying vector space \(\Lambda \oplus D(\Lambda)\) and with multiplication given by
	\[
	(x,f)(y,g)=\bigl(xy,\; x\cdot g+f\cdot y\bigr)
	\]
	for all \(x,y\in \Lambda\) and \(f,g\in D(\Lambda)\).
	It is well known that the trivial extension $T(\Lambda)$ is a symmetric algebra via the nondegenerate symmetric associative bilinear form
	\[
	\langle (a, f), (b, g)\rangle=f(b)+g(a).
	\]
	Moreover, the natural projection $p: T(\Lambda)\to \Lambda$, $(x,f)\mapsto x$, and the natural injection $\iota:\Lambda\to T(\Lambda)$, $x\mapsto (x,0)$, are algebra homomorphisms such that \(p\circ\iota=\id_\Lambda\).

	We shall also need the following comparison result relating the Hochschild cohomology of an associative algebra to that of its trivial extension.

	\begin{theorem}[{\cite[Theorem A]{AGST16}}] \label{Theorem: surjection of Hochschild cohomology}
		Let \(\Lambda\) be a finite-dimensional $K$-algebra. The map introduced in Proposition~\ref{prop:cochain-projection}
		\[
		\varphi^*:\rmHH^*(T(\Lambda))\longrightarrow\rmHH^*(\Lambda)
		\]
		is a surjective homomorphism of graded algebras.
	\end{theorem}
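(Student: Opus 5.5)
The plan is to prove surjectivity at the cochain level: for every Hochschild cocycle $f\in \rmC^n(\Lambda)$ we construct a cocycle $F\in \rmC^n(T(\Lambda))$ with $\Phi^n(F)=p\circ F\circ\iota^{\otimes n}=f$. Then $\varphi^*[F]=[f]$. Since Proposition~\ref{prop:cochain-projection} already shows that $\varphi^*$ is a homomorphism of graded algebras, only surjectivity remains. For $n=0$ this is immediate: a central element $z$ of $\Lambda$ is sent by $\iota$ to $(z,0)$, and $(z,0)$ is central in $T(\Lambda)$ because $z\cdot g=g\cdot z$ for all $g\in D(\Lambda)$ whenever $z$ is central.

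We use the weight grading $T(\Lambda)=\Lambda\oplus D(\Lambda)$, with $\Lambda$ in weight $0$ and $D(\Lambda)$ in weight $1$, whose products of weight $\ge 2$ vanish. A first attempt is $F_0=\iota\circ f\circ p^{\otimes n}$. Write $a_i=(x_i,g_i)$ for elements of $T(\Lambda)$. The $\Lambda$-component of $\delta_{T(\Lambda)}F_0(a_1\ot\cdots\ot a_{n+1})$ is $(\delta_\Lambda f)(x_1\ot\cdots\ot x_{n+1})=0$. The inner terms of the differential contribute nothing to the $D(\Lambda)$-component, since $f\circ p$ only sees $x_ix_{i+1}$. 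So the only defect is the weight-one cochain
\[
c(a_1\ot\cdots\ot a_{n+1})=(-1)^{n+1}g_1\cdot f(x_2\ot\cdots\ot x_{n+1})+f(x_1\ot\cdots\ot x_n)\cdot g_{n+1}.
\]
Hence $F_0$ must be corrected by a cochain $G$ which vanishes on $\Lambda^{\otimes n}$, so that $\Phi^n$ is unchanged, and which satisfies $\delta G=-c$.

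The correction $G$ will be taken of weight one. It vanishes unless exactly one argument lies in $D(\Lambda)$, and it is $D(\Lambda)$-valued. Concretely, for $a_i=g_i\in D(\Lambda)$ and the other $a_j=x_j\in\Lambda$, we set
\[
G(x_1\ot\cdots\ot g_i\ot\cdots\ot x_n)(y)=\pm\, g_i\bigl(f(x_{i+1}\ot\cdots\ot x_n\ot y\ot x_1\ot\cdots\ot x_{i-1})\bigr),\qquad y\in\Lambda .
\]
This is the cyclic rotation that transports $f$ through the identification $\rmC^n(\Lambda,D(\Lambda))\cong\Hom_K(\Lambda^{\otimes n+1},K)$. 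The signs are chosen as the signs of the cyclic permutation, adjusted to match the conventions of $\delta$.

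The main step, and the expected obstacle, is checking that $\delta_{T(\Lambda)}(F_0+G)=0$ on weight-one inputs. We evaluate the $D(\Lambda)$-component at an element $y\in\Lambda$ and use the bimodule structure $(a\cdot g\cdot b)(x)=g(bxa)$. Every term then becomes $g_i$ applied to an expression in $f$ evaluated on a cyclic rotation of $(x_{i+1},\dots,x_{n+1},y,x_1,\dots,x_{i-1})$. The sum of these terms should regroup exactly as $g_i\bigl((\delta_\Lambda f)(\text{rotated arguments})\bigr)$, plus the two boundary terms that cancel $c$ when $i=1$ or $i=n+1$. Since $\delta_\Lambda f=0$, everything vanishes. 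Inputs of weight $\ge 2$ give zero automatically, because all products and values there have weight $\ge2$ and thus vanish. I expect the sign bookkeeping in this regrouping to be the only real difficulty. I would first verify it for $n=1$, where $F$ is the derivation $(x,g)\mapsto(f(x),\,y\mapsto -g(f(y)))$ up to sign, and then fix the general sign rule by induction on the position $i$.
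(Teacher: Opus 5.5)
Your proof is correct, but it is not the paper's route: the paper gives no argument of its own here. It imports surjectivity from \cite[Theorem A]{AGST16} and multiplicativity from Proposition~\ref{prop:cochain-projection}. You instead construct an explicit cocycle lift, and the computation you defer does close up. With the paper's convention for $\delta$ and $(a\cdot g\cdot b)(x)=g(bxa)$, the choice
\[
G(x_1\ot\cdots\ot g_i\ot\cdots\ot x_n)(y)=(-1)^{ni}\, g_i\bigl(f(x_{i+1}\ot\cdots\ot x_n\ot y\ot x_1\ot\cdots\ot x_{i-1})\bigr)
\]
works. For $n=1$ the sign is $-1$, recovering your derivation; for $n=2$ all signs are $+1$.

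When you write it up, note that the sign system is overdetermined. The interior positions $2\le i\le n$ of the $(n+1)$-tuple force $\epsilon_i=(-1)^n\epsilon_{i-1}$. The two boundary positions, where $G$ must cancel $c$, force both $\epsilon_1=(-1)^n$ and $\epsilon_n=(-1)^n$. An induction on $i$ starting at $i=1$ does not see the second boundary condition. It holds only because $\epsilon_n=(-1)^{n(n-1)}\epsilon_1=\epsilon_1$, so you must check it separately.

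Your route also gives more than surjectivity. Run the same computation without assuming $\delta_\Lambda f=0$: it shows that $L(f)=\iota\circ f\circ p^{\ot n}+G_f$ satisfies $\delta_{T(\Lambda)}L(f)=L(\delta_\Lambda f)$, in degree $0$ as well. So $L$ is a chain map with $\Phi\circ L=\id$, and $\rmHH^*(\Lambda)$ is a direct summand of $\rmHH^*(T(\Lambda))$ as a graded vector space.

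Conceptually, $G$ witnesses the vanishing of the connecting map $\rmHH^n(\Lambda)\to\mathrm{Ext}^n_{\Lambda^e}(D(\Lambda),D(\Lambda))$ in the long exact sequence of the weight-zero part of $\rmC^\bullet(T(\Lambda))$. The cyclic rotation of $y$ through the arguments of $f$ is exactly where $E=D(\Lambda)$ is used. For a square-zero extension $\Lambda\ltimes E$ by an arbitrary bimodule, the analogous map can fail to be surjective already in degree $0$: take $\Lambda=K[x]/(x^2)$ and $E=\Lambda\ot\Lambda$.

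Since $L$ is not multiplicative in general, you are right to still rely on Proposition~\ref{prop:cochain-projection} for the algebra structure. The paper's citation buys brevity and places the statement within the general split-extension framework of \cite{AGST16}. Your argument buys a self-contained proof with an explicit chain-level section.
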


	We are now in a position to prove the main result.

	\begin{proof}[Proof of Theorem~\ref{Theorem: selfinjective counterexample}]
		By Theorem~\ref{Theorem: surjection of Hochschild cohomology}, the maps $p$ and $\iota$ induce a surjective homomorphism of graded algebras
		\[
		\varphi^*:\rmHH^*(T(A))\longrightarrow\rmHH^*(A).
		\]
		Being a homogeneous algebra homomorphism, $\varphi^*$ sends homogeneous nilpotent elements to nilpotent elements; hence it maps the ideal $\calN$ of $\rmHH^*(T(A))$ into the ideal $\calN$ of $\rmHH^*(A)$, and therefore induces a surjective homomorphism of graded algebras
		\[
		\overline{\varphi^*}:\rmHH^*(T(A))/\calN\longrightarrow\rmHH^*(A)/\calN.
		\]
		If $\rmHH^*(T(A))/\calN$ were finitely generated as a $K$-algebra, then so would be its quotient $\rmHH^*(A)/\calN$, contradicting Theorem~\ref{thm:xu-snashall}. Consequently, $\rmHH^*(T(A))/\calN$ is not finitely generated as a $K$-algebra.
	\end{proof}

	\bigskip

	\noindent\textbf{Acknowledgements.}
	This work was supported by the National Key R\&D Program of China (No.~2024YFA1013803) and by the Shanghai Key Laboratory of PMMP (No.~22DZ2229014).

	The precise form of the counterexample was conceived by the authors, while the method of proof was suggested by the AI assistant Kimi. We express our sincere gratitude to Kimi for enlarging mankind's knowledge.

\end{document}